\documentclass[conference]{IEEEtran}
\IEEEoverridecommandlockouts
\usepackage{cite}
\usepackage{amsmath,amssymb,amsfonts}
\usepackage{algorithmic}
\usepackage{graphicx}
\usepackage{textcomp}
\usepackage{xcolor}
\usepackage{MnSymbol}

\usepackage{mathsemantics}
\usepackage[commentmarkup = footnote]{changes}
\usepackage{todonotes}
\usepackage{url}
\usetikzlibrary{decorations.pathmorphing,arrows.meta}

\usepackage{changepage} 

\makeatletter
\def\widebreve{\mathpalette\wide@breve}
\def\wide@breve#1#2{\sbox\z@{$#1#2$}%
     \mathop{\vbox{\m@th\ialign{##\crcr
\kern0.08em\brevefill#1{0.8\wd\z@}\crcr\noalign{\nointerlineskip}%
                    $\hss#1#2\hss$\crcr}}}\limits}
\def\brevefill#1#2{$\m@th\sbox\tw@{$#1($}%
  \hss\resizebox{#2}{\wd\tw@}{\rotatebox[origin=c]{90}{\upshape(}}\hss$}
\makeatletter

\newcommand{\R}{\mathbb{R}}
\renewcommand{\so}{\mathfrak{so}}

\newcommand{\J}{\mathbb{J}}

\newcommand{\SO}{\normalfont{\text{SO}}}
\newcommand{\SE}{\normalfont{\text{SE}}}

\newtheorem{theorem}{Theorem}[section]

\newenvironment{proof}{\paragraph{Proof}}{\hfill$\square$}
\newtheorem{corollary}[theorem]{Corollary}

\newtheorem{remark}[theorem]{Remark}

\begin{document}

\title{A Generalized Sasaki Metric on the Second-Order Tangent Bundle
\thanks{M. Camarinha was supported by the Centre for Mathematics of the University of Coimbra (CMUC, https://doi.org/10.54499/UID/00324/2025) under the Portuguese Foundation for Science and Technology (FCT), Grants UID/00324/2025 and UID/PRR/00324/2025. J. Goodman was supported by the Marie Skłodowska-Curie grant agreement No. 101206748 (GNACS).}
}

\author{
\IEEEauthorblockN{Margarida Camarinha}
\IEEEauthorblockA{\textit{CMUC, Department of Mathematics} \\
\textit{University of Coimbra}\\
Coimbra, Portugal \\
mmlsc@mat.uc.pt}
\and
\IEEEauthorblockN{Jacob Goodman}
\IEEEauthorblockA{\textit{Department of Mathematical Sciences} \\
\textit{Norwegian University of Science and Technology}\\
Trondheim, Norway \\
jacob.goodman@ntnu.no}
}
\date{}

\maketitle

\begin{abstract}
This paper constructs a connection map on the second-order
tangent bundle induced by a linear connection on the base manifold and
uses it to define a generalized Sasaki metric. The associated geodesic
equations are derived, and jet-constrained variational problems are
shown to yield Riemannian quintics in tension. The construction is then
specialized to rigid body attitude dynamics with first-order actuator
dynamics, producing an intrinsic higher-order trajectory model on the
rotation group. Numerical simulations compare quintics in tension with
Riemannian cubics as nominal trajectories and show modest reductions in
actuator-relevant cost with comparable tracking performance.
\end{abstract}

\begin{IEEEkeywords}
Manifolds, geodesics, optimal control, attitude control, rigid body dynamics
\end{IEEEkeywords}

\section{Introduction}
Classical mechanical control systems are typically second order, arising from Newton's law or, more generally, from the Euler--Lagrange equations of a mechanical Lagrangian. On a smooth manifold \(M\), however, acceleration is not a coordinate-free object unless one specifies additional geometric structure, usually a linear connection \(\nabla\) and, in particular, the Levi--Civita connection of a Riemannian metric \(g\). An equivalent viewpoint is to rewrite the dynamics as a first-order control-affine system on the tangent bundle \(TM\). This perspective is especially prominent for connection control-affine systems, whose geometry is naturally encoded on \(TM\) rather than on \(M\) alone \cite{Lewis}.

Once the dynamics are translated to \(TM\), one is led to ask what geometric structure should be placed on this enlarged state space. In many problems this choice is made implicitly. For instance, quadratic running costs in optimal control, distance-based configuration errors in Lyapunov analysis, and sensitivity estimates for perturbed trajectories all require a notion of metric on the state space. When \(M\) is Riemannian, the classical Sasaki metric provides a canonical prolongation of \(g\) to \(TM\), compatible with the horizontal--vertical splitting induced by \(\nabla\) (see \cite{Sasaki} for a modern treatment). In this sense, the Sasaki metric supplies a natural ambient geometry for second-order systems viewed as first-order systems on \(TM\).

In many applications, however, the second-order description is itself an idealization. Practical actuation is rarely ``direct'': the commanded input acts on internal actuator variables, and the corresponding generalized force is generated through additional actuator dynamics, which cause the effective dynamics of the total system to be higher-order. In such a case, the effective control system can be modeled naturally as a control-affine system on some higher-order tangent bundle, where again we are left with a need for some canonical prolongation of the Riemannian structure on $M$ analogous to the Sasaki metric on $TM$. Many such structures have been proposed in the literature, however a substantial part centers on nonlinear connections induced by semisprays or higher-order Lagrangians \cite{Leon, Bucataru}. Our approach is different in spirit. 

In this work, we begin with a linear connection $\nabla$ on the base manifold \(M\) and construct associated connection maps \(K_1,K_2\) in terms of iterated covariant derivatives on $M$, which themselves induce a canonical nonlinear connection on $T^{(2)}M$ \cite{Suri}. In this way, our construction extends the classical Dombrowski--Sasaki picture on \(TM\) to \(T^{(2)}M\) \cite{Sasaki58}, \cite{Dombrowski}: the connection maps determine a natural splitting of \(TT^{(2)}M\), and the generalized Sasaki metric is obtained by declaring the corresponding direct-sum decomposition be orthogonal. This viewpoint is particularly convenient for computation and for control systems written intrinsically in terms of covariant derivatives, since it does not require a prior choice of semispray or higher-order Lagrangian. With this choice, we derive the corresponding geodesic equations and study several of their structural properties, including the relation between geodesics on \(T^{(2)}M\) and jet lifts of curves on \(M\). We then specialize the construction to \(M=\SO(3)\), where the geometry admits an explicit left-trivialized description, and use this to formulate a control-affine rigid body model on \(T^{(2)}\SO(3)\) with first-order actuator dynamics, together with a natural higher-order optimal control problem induced by the generalized Sasaki metric.

\section{Connection Maps}

\subsection{The Geometry of the Tangent Bundle}

Let \(M\) be a finite-dimensional smooth manifold and let \(\tau_{TM}:TM\to M\) denote the tangent bundle projection. The tangent bundle \(TM\) carries a canonical vertical subbundle
\[
V:=\ker(\tau_{TM*})\subset TTM.
\]
A choice of complementary subbundle \(H\subset TTM\) such that \(TTM=H\oplus V\) is called a \emph{nonlinear connection} (or Ehresmann connection). Equivalently, one may work with a \emph{connection map} \(K:TTM\to TM\), i.e. a map such that \(K|_{(x,v)}:T_{(x,v)}TM\to T_xM\) is linear and
\[
K\circ J=\tau_{TM*},
\]
where \(J:TTM\to TTM\) is the almost tangent structure, defined by
\[
J(X)=(\tau_{TM*}X)^v,\qquad X\in TTM,
\]
with \((\cdot)^v\) denoting the vertical lift. The associated horizontal bundle is then \(H=\ker(K)\), and every nonlinear connection arises uniquely in this way; see \cite{Suri}.

Given a linear connection \(\nabla\) on \(M\), Dombrowski constructed a canonical connection map \(K\) on \(TM\) \cite{Dombrowski}. If \(X\in T_{(x,v)}TM\) and \(\gamma(s,t)\) is adapted to \(X\), in the sense that
\[
\gamma(0,0)=x,\quad \partial_t\gamma(0,0)=v,\quad
\partial_s\big|_{s=0}(\gamma(s,0),\partial_t\gamma(s,0))=X,
\]
then
\begin{equation}\label{eq: connection_map_TM}
K(X)=\nabla_{\partial_t\gamma}\partial_s\gamma\big|_{(0,0)}.
\end{equation}
In what follows, \(\nabla\) will always denote the Levi--Civita connection of the given Riemannian metric \(g\) on \(M\).

The corresponding Sasaki metric on \(TM\) is obtained by requiring the splitting \(TTM=\ker(K)\oplus V\) to be orthogonal. Explicitly,
\begin{equation}\label{eq: Sasaki_metric}
   \llangle X,Y\rrangle =
\langle \tau_{TM*}X,\tau_{TM*}Y\rangle
+
\langle K(X),K(Y)\rangle, 
\end{equation}

for all $X,Y \in T_{(x,v)}TM$. 
If \(\Gamma(t)\) is a curve on \(TM\), write
\[
q(t):=\tau_{TM}(\Gamma(t)),\qquad \Gamma(t)=(q(t),v(t)),
\]
where \(v(t)\) is a vector field along \(q\). The connection map then induces the bundle isomorphism
\[
\Phi:TTM\to TM\oplus TM,\qquad \Phi(X):=(\tau_{TM*}X,K(X)),
\]
from which it can be seen that
\[
\Phi(\dot\Gamma(t))=(\dot q(t),\nabla_{\dot q}v(t)).
\]
Hence the Sasaki energy can be written as
\[
E^{(1)}[\Gamma]
=
\frac12\int_0^T \llangle \dot\Gamma,\dot\Gamma\rrangle\,dt
=
\frac12\int_0^T \left(\|\dot q\|^2+\|\nabla_{\dot q}v\|^2\right)\,dt.
\]
 
The critical points of \(E^{(1)}\) satisfy
\begin{equation}\label{eq: sasaki_geodesic_TM}
\nabla_{\dot q}\dot q+\mathcal R(v,\nabla_{\dot q}v)\dot q=0,
\qquad
\nabla_{\dot q}^2v=0,
\end{equation}
where \(\mathcal R\) denotes the curvature endomorphism of \(\nabla\). In particular, if \(K(\dot\Gamma)=0\), equivalently \(\nabla_{\dot q}v=0\), then \eqref{eq: sasaki_geodesic_TM} reduces to
\[
\nabla_{\dot q}\dot q=0,\qquad \nabla_{\dot q}v=0,
\]
so horizontal geodesics of the Sasaki metric project to geodesics of \(M\). As a consequence, the first jet lift \(j^1q\) is a geodesic of the Sasaki metric on \(TM\) if and only if \(q\) is a geodesic on \(M\). 

For \(M=\SO(3)\), the tangent bundle trivializes via left translations as \(T\SO(3)\cong \SO(3)\times\R^3\). If \(\Gamma(t)=(R(t),V(t))\), we write
\[
\hat\eta=R^TV,\qquad \hat\Omega=R^T\dot R,\qquad \hat\Lambda=R^T\nabla_{\dot R}V,
\]
where \(\hat{\cdot}:\R^3\to\so(3)\) is the hat isomorphism given by
\[\begin{bmatrix}
    x \\ y\\ z
\end{bmatrix}^\wedge = \begin{bmatrix} 0 & -z & y \\ z & 0 & -x \\ -y & x & 0 \end{bmatrix}. \]
In these coordinates, the Sasaki geodesic equations become
\begin{align*}
\dot R &= R\hat\Omega, &&\dot\Omega = \frac14\,\Omega\times(\Lambda\times\eta),\\
\dot\eta &= \Lambda+\frac12\,\eta\times\Omega, &&\dot\Lambda = \frac12\,\Lambda\times\Omega,\\
\end{align*}
using the standard identities for the bi-invariant metric on \(\SO(3)\),
\[
\nabla_AB=\frac12\,A\times B,
\qquad
\mathcal R(A,B)C=-\frac14\,(A\times B)\times C,
\]
for left-invariant vector fields \(A,B,C \in \mathfrak{X}_L(\SO(3))\). 

The rigidity of the Sasaki metric makes its geodesics ill-suited for path-planning purposes, since it is precisely the jet lifts that define the higher-order geometry of mechanical systems. Instead, it is often more useful to consider \emph{constrained} variational problems obtained by minimizing the energy functional of the Sasaki metric through jet lifts:
$$\min_q E^{(1)}[j^1 q] = \min_{q} \int_0^T \left(\|\dot{q}\|^2 + \|\nabla_{\dot{q}} \dot{q}\|^2 \right)dt,$$
the solutions to which can be interpreted as the closed-loop integral curves of a double-integrator system with an $L^2$-optimal control plus a regularization penalty:
$$\min_{u,q} \int_0^T \left( \|\dot{q}\|^2 + \|u\|^2\right)dt, \qquad \text{subject to } \nabla_{\dot{q}}\dot{q} = u.$$
To gain control over the regularization term $\|\dot{q}\|$, we consider the vector bundle isomorphism $F_\varepsilon: TM \to TM$ given by
$$F_\varepsilon(q,v) = \left(q,\,\frac1{\varepsilon}v \right), \qquad \varepsilon>0.$$
Pulling the Sasaki metric back through $F_\varepsilon$ provides a \emph{weighted} Sasaki metric on $TM$:
\begin{equation}\label{eq: weighted_Sasaki_metric}
    \llangle X,Y\rrangle
=
\langle \tau_{TM*}X,\tau_{TM*}Y\rangle
+
\frac1{\varepsilon^2}\langle K(X),K(Y)\rangle,
\end{equation}
whose jet-constrained geodesics solve:
\begin{equation}\label{eq: cubic_in_tension_variational_principle}
    \min_q \int_0^T \left(\varepsilon^2 \|\dot{q}\|^2 + \|\nabla_{\dot{q}}\dot{q}\|^2 \right)dt
\end{equation}
The solutions to which are known as \emph{Riemannian cubic polynomials in tension}, where $\varepsilon>0$ is the tension parameter \cite{SLCamCr95Proc, SLCamCr2000, NoakesPopiel2006}. The critical points satisfy the fourth-order ODE 
\begin{equation}\label{eq: cubic_tension}
    \nabla_{\dot{q}}^3 \dot{q} + \mathcal{R}(\nabla_{\dot{q}} \dot{q}, \dot{q})\dot{q} = \varepsilon^2\nabla_{\dot{q}} \dot{q}
\end{equation}

With respect to the standard bi-invariant metric on $\SO(3)$, equation \eqref{eq: cubic_tension} becomes
\begin{align*}
    \dot{R} &= R\hat{\Omega} \\
    \dddot{\Omega} + \Omega \times \Omega + \frac12 &\Omega \times(\Omega \times \dot{\Omega}) = \varepsilon\dot{\Omega}
\end{align*}


\subsection{Prolongation to the Second-Order Tangent Bundle}

Let \(\tau_{TM}:TM\to M\) and \(\tau_{TTM}:TTM\to TM\) denote the tangent bundle projections. The \emph{second-order tangent bundle} is the submanifold
\[
T^{(2)}M
=
\{u\in TTM \;:\; \tau_{TM*}(u)=\tau_{TTM}(u)\}.
\]
Equivalently, \(u\in T^{(2)}M\) may be identified with the \(2\)-jet of a curve \(c\) on \(M\), and in local coordinates \((q^i,v^i)\) on \(TM\) has the form
\[
u = v^i\frac{\partial}{\partial q^i}\Big|_{(x,v)}
    +u^i\frac{\partial}{\partial v^i}\Big|_{(x,v)} .
\]

There are two natural projections on \(T^{(2)}M\): the affine bundle
\[
\tau^2:=\tau_{TTM}|_{T^{(2)}M}:T^{(2)}M\to TM,
\]
and the fiber bundle
\[
\tau^1:=\tau_{TM}\circ\tau^2:T^{(2)}M\to M.
\]
These induce canonical vertical bundles
\[
V_1:=\ker(\tau^1_{*}),\qquad V_2:=\ker(\tau^2_{\ast}).
\]
The canonical almost tangent structure \(J:TT^{(2)}M\to TT^{(2)}M\) satisfies
\[
\operatorname{Im}(J)=V_1,\qquad J^2(X)=(\tau^1_{*}X)^{v_2},
\]
where \((\cdot)^{v_2}\) denotes the vertical lift to $V_2$, which is canonically defined since $\tau^2$ is an affine bundle. We refer to \cite{Suri} for background on connection maps in higher-order tangent geometry.

A connection map on \(T^{(2)}M\) is a pair \(\tilde K=(K_1,K_2)\) of fiberwise linear maps \(K_i:TT^{(2)}M\to TM\) satisfying
\[
K_2\circ J=K_1,\qquad K_1\circ J=\tau^1_{*}.
\]
The corresponding horizontal bundle is
\[
H:=\ker(\tilde K)=\ker K_1\cap \ker K_2,
\]
and one obtains the decomposition
\[
TT^{(2)}M = H \oplus J(H)\oplus V_2.
\]
Accordingly, the map \(\Phi:TT^{(2)}M\to TM\oplus TM\oplus TM\),
\[
\Phi(X):=(\tau^1_*X,K_1(X),K_2(X)),
\]
is a bundle isomorphism.

The Levi--Civita connection \(\nabla\) also induces a family of smooth bundle isomorphisms \(F_{\varepsilon_1,\, \varepsilon_2}:T^{(2)}M\to TM\oplus TM\),
\[
F_{\varepsilon_1,\, \varepsilon_2}([c])=
\left(c(0),\frac1{\varepsilon_1}\dot c(0),\frac1{2\varepsilon_2}\nabla_{\dot c}\dot c(0)\right),
\]
where $\varepsilon_1, \, \varepsilon_2 > 0$ are weights. Thus a curve \(\Gamma(t)\) on \(T^{(2)}M\) may be identified with a triple
\[
F_{\varepsilon_1,\, \varepsilon_2}(\Gamma(t))=(q(t), v(t),y(t)),
\]
where \(q\) is a curve on \(M\) and \(v,y\) are vector fields along \(q\).

We now define the second-order connection map induced by \(\nabla\). Let \(K:TTM\to TM\) be the connection map \eqref{eq: connection_map_TM} on \(TM\), and set
\[
K_1:=K\circ \tau^2_*.
\]
Further, for \(X\in T_uT^{(2)}M\), let \(\gamma(s,t)\) be a family of curves adapted to \(X\). We define
\[
K_2(X):= \frac12\nabla^2_{\partial_t\gamma}\partial_s\gamma\big|_{(0,0)}.
\]
\begin{theorem}
    The map $\tilde{K} = (K_1, K_2)$ is a connection map on $T^{(2)}M$. 
\end{theorem}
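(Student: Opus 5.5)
We have to verify three things: that $K_1$ and $K_2$ are well defined and fiberwise linear, and the two identities $K_1\circ J=\tau^1_*$ and $K_2\circ J=K_1$. The natural tool is a local coordinate expression. Around a chart $(q^i)$ on $M$ we use induced coordinates $(q^i,v^i,u^i)$ on $T^{(2)}M$, and write $X\in T_uT^{(2)}M$ as $X=a^i\partial_{q^i}+b^i\partial_{v^i}+c^i\partial_{u^i}$.

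\textbf{Coordinate formulas.} An adapted family $\gamma(s,t)$ satisfies
\[
\gamma(0,0)=q,\quad \partial_t\gamma=v,\quad \partial_t^2\gamma=u \text{ at } (0,0),
\]
together with
\[
\partial_s\gamma=a,\quad \partial_s\partial_t\gamma=b,\quad \partial_s\partial_t^2\gamma=c \text{ at } (0,0).
\]
Let $W:=\partial_s\gamma$, a vector field along $t\mapsto\gamma(0,t)$. We expand
\[
\nabla_{\partial_t}W=\partial_tW^k+\Gamma^k_{ij}v^iW^j.
\]
Differentiating once more in $t$ and using symmetry of mixed partials gives $\partial_tW^k=b^k$ and $\partial_t^2W^k=c^k$ at $(0,0)$. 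One also needs $\partial_t\Gamma^k_{ij}=\partial_l\Gamma^k_{ij}v^l$ and $\partial_t(\partial_t\gamma^i)=u^i$. This yields
\[
2K_2(X)^k=c^k+2\Gamma^k_{ij}v^ib^j+\Gamma^k_{ij}u^ia^j+\bigl(\partial_l\Gamma^k_{ij}v^l+\Gamma^k_{lm}\Gamma^m_{ij}v^l\bigr)v^ia^j .
\]
The right-hand side depends only on $(q,v,u)$ and $(a,b,c)$, and it is linear in $(a,b,c)$. This shows at once that $K_2$ is independent of the adapted family chosen, and that it is fiberwise linear. Existence of an adapted family follows from a polynomial ansatz in coordinates, e.g.
\[
\gamma(s,t)=q+sa+t(v+sb)+\tfrac{t^2}{2}(u+sc).
\]
For $K_1=K\circ\tau^2_*$, linearity and well-definedness follow from Dombrowski's construction. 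In coordinates $\tau^2_*X=a^i\partial_{q^i}+b^i\partial_{v^i}$, so
\[
K_1(X)^k=b^k+\Gamma^k_{ij}v^ia^j .
\]

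\textbf{The identities.} Next we compute $J$ in coordinates. The defining relations $\operatorname{Im}J=V_1$ and $J^2X=(\tau^1_*X)^{v_2}$, with the standard normalization for the second-order tangent structure (as in \cite{Suri}), give
\[
J(a,b,c)=(0,\,a,\,2b).
\]
The factor $2$ comes from the $2$-jet convention $\partial_t^2\gamma=u$; it is exactly what the $\tfrac12$ in the definition of $K_2$ compensates for.

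With this, $K_1(JX)^k=a^k$, which is $\tau^1_*X$. For the second identity we substitute $a\mapsto0$, $b\mapsto a$, $c\mapsto 2b$ into the formula for $2K_2$. All terms containing the old $a$ drop out, leaving
\[
2K_2(JX)^k=2b^k+2\Gamma^k_{ij}v^ia^j=2K_1(X)^k .
\]
Hence $K_2\circ J=K_1$.

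An intrinsic alternative for this step is also available. $JX$ is represented by the family $\tilde\gamma(s,t)=\gamma(0,t+s\,\cdot)$-type reparametrization, specifically $\tilde\gamma(s,t)=\gamma(t\cdot\text{shift})$, which has $\partial_s\tilde\gamma=t\,\partial_t\gamma+O(s)$. Then $\tfrac12\nabla_t^2(t\dot c)=\nabla_t\dot c$ at $t=0$, which recovers $K_1$. I would present the coordinate version and mention this one as a check.

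\textbf{Main obstacle.} The main difficulty is pinning down $J$ in coordinates with the correct normalization, namely whether the third component is $b$ or $2b$. This depends on whether $u$ is identified with $\partial_t^2c$ or with $\tfrac12\partial_t^2c$ in the $2$-jet conventions. I would fix this first by computing $J$ directly from its definition via curves: $J$ is induced by the reparametrization $c(t)\mapsto c(t+st)$ acting on $2$-jets. I would then check that it is consistent with the normalizations $\tfrac12$ in $K_2$ and $\tfrac1{2\varepsilon_2}$ in $F_{\varepsilon_1,\varepsilon_2}$.
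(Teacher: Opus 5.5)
Your coordinate proof is correct, but it takes a different route from the paper. The paper argues intrinsically. It gets $K_1\circ J=\tau^1_*$ from $\tau^2_*\circ J=J\circ\tau^2_*$ and Dombrowski's identity $K\circ J=\tau_{TM*}$. It gets $K_2\circ J=K_1$ by representing $JX$ by the family $\beta(s,t)=\gamma(st,t)$. The variation field of $\beta$ along the base curve is $\partial_s\beta(0,t)=t\,W(t)$ with $W=\partial_s\gamma(0,\cdot)$, so the Leibniz rule gives $\tfrac12\nabla_t^2(tW)|_{t=0}=\nabla_tW|_{t=0}=K_1(X)$.

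Your explicit formulas for $K_1$, $2K_2$ and $J$ cost some Christoffel bookkeeping. In return they prove what the paper only calls ``easily seen'': that $K_2$ is independent of the adapted family and is fiberwise linear. Your normalization $J(a,b,c)=(0,a,2b)$ is exactly what the paper's family produces, since $\partial_s\partial_t^2\beta(0,0)=2\,\partial_s\partial_t\gamma(0,0)$. This choice gives $J^2X=(0,0,2a)$. So the relation $J^2X=(\tau^1_*X)^{v_2}$ holds only if the vertical lift is normalized in the fibre coordinate $y=\tfrac12u$ used by $F_{1,1}$. That is why $\operatorname{Im}J=V_1$ and this relation do not by themselves fix the factor, and why a curve-level definition of $J$ is needed.

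That curve-level definition is where your plan goes wrong. The reparametrization $c(t)\mapsto c(t+st)$ of a single curve does not define the endomorphism $J$. It defines the Liouville-type vector field $(0,v,2u)$ on $T^{(2)}M$.

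Likewise, your intrinsic check with $\partial_s\tilde\gamma=t\,\partial_t\gamma$ computes $\tfrac12\nabla_t^2(t\dot c)|_{t=0}=\nabla_{\dot c}\dot c(0)$. This equals $K_1(X)$ only in the special case $X=\tfrac{d}{dt}j^2c$, represented by $\gamma(s,t)=c(s+t)$, where $\partial_s\gamma=\partial_t\gamma$. For general $X$ you must use the two-parameter family $\gamma(st,t)$, that is, replace $\partial_t\gamma$ by $\partial_s\gamma$ in your check. That corrected check is precisely the paper's proof, and it also settles the factor of $2$ you identified as the main obstacle.
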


\begin{proof}
    It is easily seen that both $K_1$ and $K_2$ are fiberwise linear maps, and
$$K_1 \circ J = K \circ \tau_\ast^2 \circ J = K \circ J \circ \tau^2_\ast = \tau_{TM\ast} \circ \tau_\ast^2 = \tau^1_\ast$$
It remains to show that $K_2 \circ J = K_1.$
Observe that if $\gamma(s,t)$ is adapted to $X \in T_{u}T^{(2)}M$, then the curve $\beta(s,t) = \gamma(st,t)$ is adapted to $J(X)$. Indeed, $\beta(0,t) = \gamma(0,t)$, so that $u = [\beta(0,\cdot)]$, and
$$\partial_s \beta(0,0) = 0, \qquad \partial_s\partial_t\beta^i(0,0) = \partial_s \gamma^i(0,0)$$
which are precisely the coordinates of $J(X)$. Hence,
\begin{align*}
K_2(J(X)) &= \frac12\nabla^2_{\partial_t\beta}\partial_s\beta\big|_{(0,0)} = \frac12 \nabla^2_{\partial_t \beta} t \partial_s \gamma \big|_{(0,0)} \\
&= \frac12 \left( 2\nabla_{\partial_t \beta} \partial_s \gamma + t\nabla^2_{\partial_t \beta} \partial_s \gamma\right)\big|_{(0,0)} \\
&= \nabla_{\partial_t \gamma} \partial_s \gamma \big|_{(0,0)} = K_1(X)
\end{align*}
\end{proof}

If \(\Gamma(t)\) is identified with \((q(t),v(t),y(t))\), then
\[
K_1(\dot\Gamma)=\varepsilon_1\nabla_{\dot q}v,
\]
and, using the curvature identity,
\[
K_2(\dot\Gamma)
=
\varepsilon_2\nabla_{\dot q}y+\frac{\varepsilon_1}2\,\mathcal R(v,\dot q)v.
\]
Hence
\[
\Phi(\dot\Gamma)
=
\left(
\dot q,\;
\varepsilon_1\nabla_{\dot q}v,\;
\varepsilon_2\nabla_{\dot q}y+\frac{\varepsilon_1}2\,\mathcal R(v,\dot q)v
\right).
\]

This motivates the following prolongation of the Sasaki construction: we define the generalized weighted Sasaki metrics on \(T^{(2)}M\) by requiring the splitting
\[
TT^{(2)}M = H \oplus J(H)\oplus V_2
\]
to be orthogonal, with each summand identified with \(TM\) through \(\Phi\). Equivalently,
{\small\begin{equation}\label{eq: Weighted_Sasaki_T2}
    \llangle X,Y\rrangle^{(2)}
=
\langle \tau^1_*X,\tau^1_*Y\rangle
+
\frac1{\varepsilon_1^2}\langle K_1(X),K_1(Y)\rangle
+
\frac1{\varepsilon_2^2}\langle K_2(X),K_2(Y)\rangle .
\end{equation}}
Since all such metrics are isometric and induce the same geodesics (up to isometry), we study the case $\varepsilon_1=\varepsilon_2 =1$ for simplicity. If \(F_{1,1}(\Gamma)=(q,v,y)\), the corresponding energy is
\begin{align*}
    E^{(2)}[\Gamma]
&=
\frac12\int_0^T \llangle \dot\Gamma,\dot\Gamma\rrangle^{(2)}\,dt \\
&=
\frac12\int_0^T
\left(
\|\dot q\|^2
+\|\nabla_{\dot q}v\|^2
+\left\|\nabla_{\dot q}y+\frac12\,\mathcal R(v,\dot q)v\right\|^2
\right)\,dt .
\end{align*}
Letting
$$\mathcal{J}[q,v,y] := \frac12 \int_0^T \|\nabla_{\dot{q}}y + \frac12\mathcal{R}(v,\dot{q})v\|^2dt$$
we see that $\delta E^{(2)}[q,v,y] = \delta E^{(1)}[q,v] + \delta J[q,v,y]$. We previously showed that
$$\delta E^{(1)}[q,v] = -\int_0^T \left(\langle \nabla_{\dot{q}} \dot{q} + \mathcal{R}(v,\nabla_{\dot{q}} v)\dot{q}, \delta q\rangle + \langle \nabla_{\dot{q}}^2 v, \delta v\rangle \right)dt$$
Similarly, we calculate:
\begin{align*}
    \delta \mathcal{J}[q,v,y] = & \int_0^T \Big{\langle} \nabla_{\partial_\varepsilon q_\varepsilon} \nabla_{\dot{q}} y_\varepsilon + \frac12 (\nabla_{\partial_\varepsilon q_\varepsilon} R)(v_\varepsilon,\dot{q}_\varepsilon)v_\varepsilon \\
    & + \frac12 \mathcal{R}(\nabla_{\partial_\varepsilon q_\varepsilon} v_\varepsilon, \dot{q}_\varepsilon)v_\varepsilon  + \frac12 \mathcal{R}(v_\varepsilon, \nabla_{\partial_\varepsilon q_\varepsilon}\dot{q}_\varepsilon)v_\varepsilon \\
    &+ \frac12 \mathcal{R}(v_\varepsilon,\dot{q}_\varepsilon)\nabla_{\partial_\varepsilon q_\varepsilon} v_\varepsilon, \nabla_{\dot{q}} y_\varepsilon
    + \frac12 \mathcal{R}(v_\varepsilon,\dot{q}_\varepsilon)v_\varepsilon\Big{\rangle} dt \\
    = &-\int_0^T \langle \nabla_{\dot{q}}^2 y + \frac12 (\nabla_{\dot{q}} R)(v,\dot{q})v  \\
    &+ \frac12 \mathcal{R}(\nabla_{\dot{q}}v, \dot{q})v + \frac12 \mathcal{R}(v, \nabla_{\dot{q}}\dot{q})v \\
    &+ \frac12 \mathcal{R}(v, \dot{q})\nabla_{\dot{q}}v, \delta y\rangle dt \\
    &  -\frac12 \int_0^T \left\langle R\left(v, Y\right)v + \mathcal{R}(v, \dot{q})Y, \delta v\right\rangle \\
    & - \frac12 \int_0^T \left\langle (\nabla_v R)(v, Y)\dot{q} +(\nabla_{\dot{q}}R)(Y,v)v \right.\\
    & \left. + \mathcal{R}(Y,v)v + \mathcal{R}(y,Y)\dot{q}, \delta q \right\rangle dt
\end{align*}
where $Y = \nabla_{\dot{q}}y + \frac12 \mathcal{R}(v, \dot{q})v$. Applying the fundamental lemma of the calculus of variations, we obtain the following result.

%
%

\begin{theorem}
    $\Gamma = F_{1,1}^{-1}\circ (q,v,y)$ is a geodesic of the Sasaki metric if and only if
    \begin{align*}
        &2\nabla_{\dot{q}} \dot{q} + 2\mathcal{R}(v, V)\dot{q} + 2\mathcal{R}(y,Y)\dot{q} + (\nabla_{v}R)(v,Y)v \\
        &+ \mathcal{R}(V, Y)v  + \mathcal{R}(v,Y) V= 0, \\
       & 2\nabla_{\dot{q}} V +\mathcal{R}(v,Y)v + \mathcal{R}(v,\dot{q})Y = 0, \\
        &\nabla_{\dot{q}}Y = 0,
    \end{align*}
    where  $V= K_1(\dot{\Gamma}) =  \nabla_{\dot{q}} v$ and $Y = K_2(\dot{\Gamma}) =  \nabla_{\dot{q}} y + \frac12 \mathcal{R}(v, \dot{q})v$.
\end{theorem}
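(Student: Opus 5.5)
The plan is to characterize geodesics as critical points of the energy $E^{(2)}$. Since $F_{1,1}$ is a diffeomorphism, independent variations $(\delta q,\delta v,\delta y)$ of the triple $(q,v,y)$ with fixed endpoints correspond exactly to admissible variations of $\Gamma$. Here $\delta v,\delta y$ are covariant variations along $q$, so that $\nabla_{\partial_\varepsilon q_\varepsilon} v_\varepsilon|_{\varepsilon=0}=\delta v$ when $\delta q$ vanishes, and similarly for $y$. Geodesics are then the curves at which the three coefficient vector fields of $\delta q$, $\delta v$, $\delta y$ in $\delta E^{(2)}=\delta E^{(1)}+\delta\mathcal J$ vanish identically, by the fundamental lemma. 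I will take the displayed formulas for $\delta E^{(1)}$ and $\delta\mathcal J$ as the starting point and sum coefficients.

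The key steps, in order, are as follows. I begin with the $\delta y$ coefficient, which comes only from $\delta\mathcal J$. The integrand there should be recognized as exactly $\nabla_{\dot q}Y$, where $Y=\nabla_{\dot q}y+\tfrac12\mathcal R(v,\dot q)v$. Indeed, differentiating $Y$ by the Leibniz rule for $\mathcal R$ gives
\[\nabla_{\dot q}^2y+\tfrac12(\nabla_{\dot q}\mathcal R)(v,\dot q)v+\tfrac12\mathcal R(\nabla_{\dot q}v,\dot q)v+\tfrac12\mathcal R(v,\nabla_{\dot q}\dot q)v+\tfrac12\mathcal R(v,\dot q)\nabla_{\dot q}v,\]
which matches the bracket term by term. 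This yields the third equation, $\nabla_{\dot q}Y=0$.

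Next I treat the $\delta v$ coefficient, which is $-\nabla_{\dot q}^2v$ from $E^{(1)}$ plus $-\tfrac12(\mathcal R(v,Y)v+\mathcal R(v,\dot q)Y)$ from $\mathcal J$. Writing $V=\nabla_{\dot q}v$ and multiplying by $-2$ gives the second equation.

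Finally, the $\delta q$ coefficient collects $-(\nabla_{\dot q}\dot q+\mathcal R(v,V)\dot q)$ from $E^{(1)}$ together with the $\delta q$ terms of $\delta\mathcal J$. I then multiply by $-2$ and rewrite the result using the Bianchi identities and the symmetries $\langle\mathcal R(A,B)C,D\rangle=\langle\mathcal R(C,D)A,B\rangle$. This converts terms of the form $\langle Y,\mathcal R(\cdot,\cdot)\cdot\rangle$ produced by integrating by parts into the form stated.

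The main obstacle is this last $\delta q$ component. Here one must carefully integrate by parts the terms $\tfrac12\langle\mathcal R(v,\nabla_{\delta q}\dot q)v,Y\rangle$ and $\langle\nabla_{\delta q}\nabla_{\dot q}y,Y\rangle$. The latter requires commuting covariant derivatives, $\nabla_{\delta q}\nabla_{\dot q}y=\nabla_{\dot q}\nabla_{\delta q}y+\mathcal R(\delta q,\dot q)y$, which is the source of the $\mathcal R(y,Y)\dot q$ term after using pair symmetry. The former produces $\nabla_{\dot q}$ acting on $\mathcal R(v,Y)v$, and the Leibniz expansion of that gives $(\nabla_{\dot q}\mathcal R)$, $\mathcal R(V,Y)v$, $\mathcal R(v,Y)V$ and $\mathcal R(v,\nabla_{\dot q}Y)v$. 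The last of these is discarded using $\nabla_{\dot q}Y=0$ from the third equation, which is legitimate when stating the system. The remaining subtle point is the derivative-of-curvature terms. I would use the second Bianchi identity, $(\nabla_{\dot q}\mathcal R)(v,Y)+(\nabla_v\mathcal R)(Y,\dot q)+(\nabla_Y\mathcal R)(\dot q,v)=0$, together with pair symmetry of $\langle(\nabla_{\delta q}\mathcal R)(v,\dot q)v,Y\rangle$. The aim is to show that the $(\nabla_{\dot q}\mathcal R)$ contributions cancel against the one produced by integration by parts, leaving only $(\nabla_v\mathcal R)(v,Y)v$. I would verify this cancellation first, tracking signs by contracting each term against an arbitrary $\delta q$ and checking it in normal coordinates.
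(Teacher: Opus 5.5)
Your route is the paper's own: split $\delta E^{(2)}=\delta E^{(1)}+\delta\mathcal J$, collect the coefficients of $\delta q,\delta v,\delta y$, and apply the fundamental lemma. Your $\delta y$ step, giving $\nabla_{\dot q}Y=0$, is correct. The gap is that the other two steps cannot land on the displayed equations, because two displayed terms are not what the first variation produces.

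For $\delta v$ you copy the paper's coefficient $-\tfrac12\bigl(\mathcal R(v,Y)v+\mathcal R(v,\dot q)Y\bigr)$ instead of computing it. Pair symmetry actually gives $\tfrac12\langle\mathcal R(\delta v,\dot q)v,Y\rangle=\tfrac12\langle\mathcal R(v,Y)\delta v,\dot q\rangle=-\tfrac12\langle\mathcal R(v,Y)\dot q,\delta v\rangle$. So the coefficient is $-\tfrac12\bigl(\mathcal R(v,Y)\dot q+\mathcal R(v,\dot q)Y\bigr)$.

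For $\delta q$, the Bianchi identity you quote (cyclic in $\dot q,v,Y$) does not involve $\delta q$, so it cannot move $\delta q$ out of the derivative slot. The identity you need is the one cyclic in $(\delta q,v,\dot q)$; together with pair symmetry it gives
\[
\tfrac12\langle(\nabla_{\delta q}\mathcal R)(v,\dot q)v,Y\rangle=\tfrac12\bigl\langle(\nabla_{\dot q}\mathcal R)(v,Y)v-(\nabla_v\mathcal R)(v,Y)\dot q,\;\delta q\bigr\rangle .
\]
The $(\nabla_{\dot q}\mathcal R)$ term does cancel the one from integrating $\tfrac12\langle\mathcal R(v,Y)v,\nabla_{\dot q}\delta q\rangle$ by parts, as you anticipated. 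But what survives is $(\nabla_v\mathcal R)(v,Y)\dot q$, not $(\nabla_v\mathcal R)(v,Y)v$, and the normal-coordinate check you propose would expose this.

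A scaling test confirms it. The energy is quadratic in $\dot\Gamma$, so under $t\mapsto ct$ every term of the Euler--Lagrange system must scale like $c^2$. Here $\dot q,V,Y$ scale like $c$ and $v,y$ do not scale at all. Hence $\mathcal R(v,Y)v$ and $(\nabla_v\mathcal R)(v,Y)v$, which scale like $c$, cannot appear. Nor can they be traded for the correct terms using the other equations. On a non-flat $M$ one can choose $\Gamma(0),\dot\Gamma(0)$ with $\mathcal R(v,Y)(v-\dot q)\neq0$, and the true geodesic with these data violates the stated second equation at $t=0$. So the statement as written is not provable; what your computation actually establishes is
\begin{align*}
&2\nabla_{\dot q}\dot q+2\mathcal R(v,V)\dot q+2\mathcal R(y,Y)\dot q+(\nabla_v\mathcal R)(v,Y)\dot q+\mathcal R(V,Y)v+\mathcal R(v,Y)V=0,\\
&2\nabla_{\dot q}V+\mathcal R(v,Y)\dot q+\mathcal R(v,\dot q)Y=0,\qquad \nabla_{\dot q}Y=0 .
\end{align*}
The paper's displayed $\delta\mathcal J$ has the same defects, so it cannot be used as a black box. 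Its $\delta q$ part also keeps an uncancelled $(\nabla_{\dot q}\mathcal R)(Y,v)v$, has $\mathcal R(Y,v)v$ in place of $\mathcal R(V,Y)v+\mathcal R(v,Y)V$, and has factor $\tfrac12$ instead of $1$ on $\mathcal R(y,Y)\dot q$. The discrepancy is invisible for jet lifts, where $v=\dot q$, so the subsequent corollary is unaffected.
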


\begin{remark} Let $\Gamma$ be a geodesic on $T^{(2)}M$.
\begin{enumerate}
\item If  $\Gamma$ is a  $2$-horizontal geodesic on $T^{(2)}M$ (that is, $Y = K_2(\dot{\Gamma}) = 0$), then the projection of $\Gamma$ on $TM$  is a geodesic.
\item If  $\Gamma$ is a  $1$-horizontal geodesic on $T^{(2)}M$ (that is, $V= K_1(\dot{\Gamma}) =0$ and $Y = K_2(\dot{\Gamma}) = 0$), then the projection of $\Gamma$ on $M$  is a geodesic.
\end{enumerate}
\end{remark}

On the other hand, if $\Gamma$ is the $2$nd-jet lift of a curve $q$ on $M$, then the vector fields $v$ and $y$ along $q$ are
$$v=\dot{q}, \quad y=\frac 1 2 \nabla_{\dot{q}}\dot{q} $$ and the velocity vector field of $\Gamma$ satisfies
  $$
      V= K_1(\dot{\Gamma}) =  \nabla_{\dot{q}} \dot q, \quad  Y = K_2(\dot{\Gamma}) =  \frac 1 2\nabla_{\dot{q}}^2 \dot{q}.
$$

\begin{corollary}
    Let $q$ be a curve on $M$. The $2$nd-jet lift $\Gamma$ of  $q$   is a geodesic on $T^{(2)}M$ iff  $q$ is a geodesic on $M$.
\end{corollary}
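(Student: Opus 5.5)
We substitute the jet-lift data into the geodesic equations of the preceding theorem: $v=\dot q$, $y=\tfrac12\nabla_{\dot q}\dot q$, $V=\nabla_{\dot q}\dot q$, $Y=\tfrac12\nabla^2_{\dot q}\dot q$. The two directions are then handled separately.

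\emph{If $q$ is a geodesic, then $\Gamma$ is a geodesic.} This is the direct verification. If $\nabla_{\dot q}\dot q=0$, then $V=0$, $y=0$ and $Y=0$. The third equation $\nabla_{\dot q}Y=0$ holds trivially. The second reduces to $2\nabla_{\dot q}V+\mathcal R(\dot q,0)\dot q+\mathcal R(\dot q,\dot q)\cdot 0=0$, which holds. Every term of the first equation other than $2\nabla_{\dot q}\dot q$ contains $V$, $Y$ or $y$, so it vanishes. Hence $\Gamma$ satisfies the geodesic equations.

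\emph{If $\Gamma$ is a geodesic, then $q$ is a geodesic.} Write $a=\nabla_{\dot q}\dot q$.

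\textbf{Second equation.} Since $v=\dot q$, we have $\mathcal R(v,\dot q)Y=\mathcal R(\dot q,\dot q)Y=0$ by antisymmetry. The equation therefore becomes
\[
2\nabla_{\dot q}^2\dot q+\mathcal R(\dot q,Y)\dot q=0,
\]
that is, $4Y+\mathcal R(\dot q,Y)\dot q=0$.

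\textbf{Showing $Y=0$.} Pair this identity with $Y$. Because $\langle\mathcal R(\dot q,Y)\dot q,Y\rangle$ is a sectional-curvature-type quantity, we get $4\|Y\|^2=-\langle \mathcal R(\dot q,Y)\dot q,Y\rangle$. Without a curvature sign this does not immediately force $Y=0$, so we also use the third equation, $\nabla_{\dot q}Y=0$. Differentiating $4Y+\mathcal R(\dot q,Y)\dot q=0$ along $q$ gives
\[
(\nabla_{\dot q}\mathcal R)(\dot q,Y)\dot q+\mathcal R(a,Y)\dot q+\mathcal R(\dot q,Y)a=0.
\]
We would then try energy-type identities instead. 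From $Y=\tfrac12\nabla_{\dot q}a$ and $\nabla_{\dot q}Y=0$, the acceleration $a$ is affine in $t$ in a parallel frame: $a(t)=a_0+2tY$ with $Y$ parallel.

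\textbf{First equation.} We plan to combine this with the first equation, taking its inner product with $\dot q$. Curvature terms such as $\langle\mathcal R(y,Y)\dot q,\dot q\rangle$ vanish by skew-symmetry, which leaves a relation of the form $\tfrac{d}{dt}\|\dot q\|^2=$ (lower-order terms). The aim is to conclude $Y=0$, and then $a$ parallel, and finally $a=0$ from the first equation, which then reads $2a+2\mathcal R(\dot q,a)\dot q=0$.

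\textbf{Main obstacle.} The hard part is this last step: excluding nonzero $Y$ and $a$ using only the algebraic curvature identity $4Y+\mathcal R(\dot q,Y)\dot q=0$ and $2a+2\mathcal R(\dot q,a)\dot q=0$. These identities say that $Y$ and $a$ are eigenvectors of the Jacobi operator $\mathcal R(\cdot,\dot q)\dot q$ with eigenvalues $4$ and $2$. We would first look for an argument combining skew-symmetry of $\langle\mathcal R(\cdot,\cdot)\cdot,\cdot\rangle$ with the polynomial-in-$t$ structure ($\|\dot q\|^2$ grows under $a\neq0$ while eigenvalue constraints persist). If that fails, we would fall back on the generic case, or on a curvature-sign assumption such as nonpositive sectional curvature. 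In that case $\langle\mathcal R(Y,\dot q)\dot q,Y\rangle\le 0$ contradicts $4\|Y\|^2>0$ unless $Y=0$, and similarly for $a$.
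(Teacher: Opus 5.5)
Your ``if'' direction is fine, and your reduction of the second equation to $4Y+\mathcal R(\dot q,Y)\dot q=0$ is correct. The ``only if'' direction, however, is not proved: you never establish $Y=0$ or $a=0$. The fallbacks you propose (a curvature-sign hypothesis or a generic case) would only prove a weaker statement.

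Pointwise algebra alone cannot close the gap either. On $\SO(3)$ with the bi-invariant metric, the paper's formula $\mathcal R(A,B)C=-\frac14(A\times B)\times C$ gives $\mathcal R(Y,\dot q)\dot q=\frac14(\|\dot q\|^2Y-\langle Y,\dot q\rangle\dot q)$. So $4Y=\mathcal R(Y,\dot q)\dot q$ has nonzero solutions $Y\perp\dot q$ whenever $\|\dot q\|=4$. Moreover, a nonpositive-curvature assumption would exclude this positively curved example, which is the paper's main one. The real problem is that you pair with the wrong vector.

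\textbf{Missing step.} Pair the second equation with $\dot q$, not with $Y$. Since $\langle\mathcal R(\dot q,Y)\dot q,\dot q\rangle=0$, you get $\langle Y,\dot q\rangle\equiv0$. In your own language: $\dot q$ lies in the kernel of the self-adjoint Jacobi operator $\mathcal R(\cdot,\dot q)\dot q$, so any eigenvector with nonzero eigenvalue is orthogonal to $\dot q$.

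Now use the differential information you already wrote down, $\nabla_{\dot q}Y=0$ and $\nabla_{\dot q}a=2Y$. Differentiating $\langle Y,\dot q\rangle\equiv0$ once gives $\langle Y,a\rangle\equiv0$, and differentiating again gives $2\|Y\|^2\equiv0$.

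With $Y=0$, $a$ is parallel and the first equation becomes $a+\mathcal R(\dot q,a)\dot q=0$. Pairing with $\dot q$ gives $\langle a,\dot q\rangle\equiv0$, and differentiating yields $\|a\|^2=0$. No curvature assumption is needed.

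The paper's proof is essentially this. It pairs both the first and second equations with $\dot q$ to obtain $\langle a,\dot q\rangle=-\langle\nabla^2_{\dot q}\dot q,a\rangle$ and $\langle\nabla^2_{\dot q}\dot q,\dot q\rangle=0$. It then differentiates these using $\nabla^3_{\dot q}\dot q=0$ to reach $\langle a,\dot q\rangle=0$ and finally $\|a\|^2=0$.
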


\begin{proof}
 Let $\Gamma$  be the $2$nd-jet lift $j^2q$ of a curve  $q$ on $M$. We suppose that $\Gamma$ is a geodesic on $T^{(2)}M$. Then, $\Gamma$ satisfies the geodesic equations
\begin{align*}
      0=& \nabla_{\dot{q}}\dot{q} + \mathcal{R}(\dot{q}, \nabla_{\dot{q}}\dot{q})\dot{q} + \frac12 \mathcal{R}(\nabla_{\dot{q}}\dot{q}, \nabla_{\dot{q}}^2\dot{q})\dot{q} \\
      &
      + \frac14 \mathcal{R}(\dot{q}, \nabla_{\dot{q}}^2\dot{q})\nabla_{\dot{q}}\dot{q}
     +\frac 1 2(\nabla_{\dot{q}} R)(\dot{q}, \nabla_{\dot{q}}^2\dot{q})\dot{q},\\
       0=&\nabla_{\dot{q}}^2\dot{q} + \frac14 \mathcal{R}(\dot{q},\nabla_{\dot{q}}^2\dot{q})\dot{q}, \\
       0=& \nabla_{\dot{q}}^3\dot{q}.
    \end{align*}
Taking the inner product with $\dot q$ in the first two equations,
\begin{align}
  \label{eqf1aa}
&\langle\nabla_{\dot{q}}\dot{q}, \dot{q}\rangle=-\langle\nabla_{\dot{q}}^2\dot{q}, \nabla_{\dot{q}}\dot{q}\rangle, \\
&\label{eqf1bb}\langle\nabla_{\dot{q}}^2\dot{q}, \dot{q}\rangle=0.
\end{align}
Differentiating Equation (\ref{eqf1bb}), we get
 $\langle\nabla_{\dot{q}}^2\dot{q}, \nabla_{\dot{q}}\dot{q}\rangle=0$, which combined with (\ref{eqf1aa}), gives $\langle\nabla_{\dot{q}}\dot{q}, \dot{q}\rangle=0$. Finally, deriving the last equation and using Equation \eqref{eqf1bb}, we  conclude that
$\nabla_{\dot{q}}\dot{q}=0.$
\end{proof}

Returning to the example of $M =\SO(3)$ with its standard bi-invariant metric, the second-order tangent bundle $T^{(2)}\SO(3)$ can be trivialized via left-translations,  $T^{(2)}\SO(3)\cong \SO(3) \times \R^6$: A curve $\Gamma$ in $T^{(2)}\SO(3)$ can be  identified with a triple of curves $(R, \eta, \zeta)$, where  $R$ is a curve in $\SO(3)$ and $\eta$ and $\zeta$  are curves in $\R^3$ given by $F_{1,1}\circ \Gamma=(R, R\hat{\eta}, \frac 12 R\hat{\zeta})$. The velocity vector field can  also be identified with the triple $(\Omega, \Lambda, \Upsilon)$ of curves in $\R^3$ where $\dot{R} = R\hat{\Omega}$, $K_1\circ \Gamma= R\hat{\Lambda}$ and $K_2\circ \Gamma= R\hat{\Upsilon}$. Geodesics on  $\SO(3) \times \R^6$ are the curves $(R, \eta, \zeta)$ that satisfy
\begin{align*}
\dot{R} & =  R\hat{\Omega}, \; \;  \dot \eta =\Lambda+\frac 1 2 \eta \times \Omega,  \; \;
\dot \zeta = \Upsilon+\frac 1 2 \zeta \times \Omega+\frac 1 8 (\eta \times \Omega) \times \eta\\
\dot \Omega & = \frac 1 4 (\eta\times\Lambda)\times \Omega +\frac 1 4 (\zeta\times\Upsilon)\times \Omega))+\frac 1 8 (\Lambda\times\Upsilon)\times\Omega\\
& \quad+\frac 1 8 (\eta\times\Upsilon)\times\Lambda\\
\dot \Lambda & = \frac 1 2 \Lambda\times\Omega +\frac 1 8 (\eta\times\Upsilon)\times \eta+\frac 1 8 (\eta\times\Omega)\times\Upsilon, \; \; \dot \Upsilon  = \frac 1 2 \Upsilon\times\Omega.
\end{align*}

As on $TM$, we can consider the constrained variational problem obtained by constraining the generalized Sasaki energy through $2$-jets of curves. Since $2$-jets are not preserved under the mappings $F_{\varepsilon_1, \varepsilon_2}$, we return to the energy function of the \emph{weighted} generalized Sasaki metric:
$$\min_q E^{(2)}_{\varepsilon_1,\, \varepsilon_2}[j^2 q] = \min_q \int_0^T \! \big( \|\dot{q}\|^2 + \frac1{\varepsilon_1^2}\|\nabla_{\dot{q}} \dot{q}\|^2 + \frac1{4\varepsilon_2^2} \|\nabla_{\dot{q}}^2 \dot{q}\|^2\big)\, dt$$
It is clear that, up to renaming and scalar multiplication, the minimizers agree with those of the following variational problem:
\begin{equation}\label{eq: quintic_in_tension_variational_principle}
    \min_q \int_0^T \big(\varepsilon_1^2 \|\dot{q}\|^2 + \varepsilon_2^2 \|\nabla_{\dot{q}}\dot{q}\|^2 + \|\nabla^2_{\dot{q}}\dot{q}\|^2 \big)\, dt
\end{equation}
which can be understood as a variational principle defining Riemannian \emph{quintics} in tension, now with the two tension parameters $\varepsilon_1, \varepsilon_2 > 0$ which determine the weighting of the regularization terms in the corresponding triple-integrator optimal control problem:
\begin{equation}\label{eq: optimal_control_quintics_tension}
    \min_{q,u} \int_0^T \left(\varepsilon_1^2 \|\dot{q}\|^2 + \varepsilon_2^2 \|\nabla_{\dot{q}}\dot{q}\|^2 + \|u\|^2 \right)dt, \quad \text{subject to }\, \nabla^2_{\dot{q}}\dot{q}=u .
\end{equation}
\begin{theorem}
    $q = \displaystyle \arg\min_{c} E^{(2)}_{\varepsilon_1,\, \varepsilon_2}[j^2 c]$ over the space of $C^3$, piecewise smooth curves connecting fixed boundary conditions in position, velocity, and covariant acceleration at $t = 0$ and $t = T$ only if
    \begin{equation}\label{eq: quintic_tension}
\begin{split}
    \nabla_{\dot{q}}^5 \dot{q} + \mathcal{R}(\nabla_{\dot{q}}\dot{q}, \nabla^2_{\dot{q}}\dot{q})\dot{q} + \mathcal{R}&(\nabla_{\dot{q}}^3\dot{q}, \dot{q})\dot{q} = \\
    &\varepsilon_2^2\nabla^3_{\dot{q}}\dot{q} + \varepsilon_2^2\mathcal{R}(\nabla_{\dot{q}}\dot{q}, \dot{q})\dot{q} - \varepsilon_1^2 \nabla_{\dot{q}}\dot{q}
\end{split}
\end{equation}
\end{theorem}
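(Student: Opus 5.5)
Since $E^{(2)}_{\varepsilon_1,\varepsilon_2}[j^2 c]$ and the functional in \eqref{eq: quintic_in_tension_variational_principle} differ only by a positive multiple and a renaming of the weights, we work with
\[
\mathcal{I}[q]=\frac12\int_0^T \big(\varepsilon_1^2\|\dot q\|^2+\varepsilon_2^2\|\nabla_{\dot q}\dot q\|^2+\|\nabla^2_{\dot q}\dot q\|^2\big)\,dt .
\]
We split $\mathcal{I}$ into three pieces and compute the first variation of each separately. Take a smooth variation $q_s(t)$ of $q$ with variation field $W=\partial_s q_s|_{s=0}$. Because the boundary data fix position, velocity and covariant acceleration, we have $W=\nabla_{\dot q}W=\nabla^2_{\dot q}W=0$ at $t=0,T$. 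Piecewise smoothness is handled in the standard way: integrate by parts on each subinterval. The $C^3$ regularity makes the interior boundary terms of low order cancel. We first derive the Euler--Lagrange equation on each smooth piece, and this is all the ``only if'' direction needs.

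Two identities drive all the computations:
\[
\nabla_W \dot q=\nabla_{\dot q}W,\qquad \nabla_W\nabla_{\dot q}Z=\nabla_{\dot q}\nabla_W Z+\mathcal R(W,\dot q)Z .
\]
\textbf{First term.} Its variation is $\varepsilon_1^2\int\langle\nabla_{\dot q}W,\dot q\rangle$. One integration by parts gives $-\varepsilon_1^2\int\langle\nabla_{\dot q}\dot q,W\rangle$.

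\textbf{Second term.} This is exactly the cubic-in-tension computation behind \eqref{eq: cubic_tension}. We have $\nabla_W\nabla_{\dot q}\dot q=\nabla^2_{\dot q}W+\mathcal R(W,\dot q)\dot q$. Integrating by parts twice and using the curvature symmetry $\langle\mathcal R(W,\dot q)\dot q,A\rangle=\langle\mathcal R(A,\dot q)\dot q,W\rangle$ gives
\[
\varepsilon_2^2\int\big\langle\nabla^3_{\dot q}\dot q+\mathcal R(\nabla_{\dot q}\dot q,\dot q)\dot q,\,W\big\rangle .
\]
\textbf{Third term.} Apply the commutation identity twice:
\[
\nabla_W\nabla^2_{\dot q}\dot q=\nabla^3_{\dot q}W+\nabla_{\dot q}\big(\mathcal R(W,\dot q)\dot q\big)+\mathcal R(W,\dot q)\nabla_{\dot q}\dot q .
\]
Pair this with $B:=\nabla^2_{\dot q}\dot q$ and move all derivatives off $W$.
\begin{itemize}
\item The term $\langle\nabla^3_{\dot q}W,B\rangle$ contributes $-\nabla^5_{\dot q}\dot q$ after three integrations by parts.
\item The term $\langle\nabla_{\dot q}(\mathcal R(W,\dot q)\dot q),B\rangle$ becomes $-\langle\mathcal R(W,\dot q)\dot q,\nabla_{\dot q}B\rangle=-\langle\mathcal R(\nabla^3_{\dot q}\dot q,\dot q)\dot q,W\rangle$ by pair symmetry.
\item The term $\langle\mathcal R(W,\dot q)\nabla_{\dot q}\dot q,B\rangle$ is rewritten by pair symmetry as $\langle\mathcal R(\nabla_{\dot q}\dot q,B)\dot q,W\rangle$.
\end{itemize}
Summing the three pieces, setting the total to zero for all compactly supported $W$, and applying the fundamental lemma gives
\[
\nabla^5_{\dot q}\dot q+\mathcal R(\nabla^3_{\dot q}\dot q,\dot q)\dot q+\mathcal R(\nabla_{\dot q}\dot q,\nabla^2_{\dot q}\dot q)\dot q
=\varepsilon_2^2\big(\nabla^3_{\dot q}\dot q+\mathcal R(\nabla_{\dot q}\dot q,\dot q)\dot q\big)-\varepsilon_1^2\nabla_{\dot q}\dot q ,
\]
which is \eqref{eq: quintic_tension}.

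The main obstacle is the sign and symmetry bookkeeping in the third term. Specifically, I must check that $\langle\mathcal R(W,\dot q)\nabla_{\dot q}\dot q,B\rangle$ equals $\langle\mathcal R(\nabla_{\dot q}\dot q,B)\dot q,W\rangle$ and not its negative. The route is $\langle\mathcal R(X,Y)Z,U\rangle=\langle\mathcal R(Z,U)X,Y\rangle$ followed by antisymmetries, and the result must match the sign convention for $\mathcal R$ already fixed in \eqref{eq: cubic_tension}. I would verify this first, by checking that the second term reproduces \eqref{eq: cubic_tension} exactly, and then use the same convention for the third term.
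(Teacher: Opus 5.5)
Your decomposition is the paper's own. The paper also splits off the tension part, reusing the cubic-in-tension variation, and adds the first variation of $\int\|\nabla^2_{\dot q}\dot q\|^2$. The difference is that the paper imports that last variation from \cite{SLCamCr95}, while you derive it by hand. Your first two terms and the first two bullets of the third term are correct. The third bullet, however, has the wrong sign, at exactly the point you flagged. Write $A=\nabla_{\dot q}\dot q$. The same symmetry you used for the second term, $\langle\mathcal R(X,Y)Z,U\rangle=\langle\mathcal R(U,Z)Y,X\rangle$, gives
\[
\langle\mathcal R(W,\dot q)A,B\rangle=\langle\mathcal R(B,A)\dot q,W\rangle=-\langle\mathcal R(A,B)\dot q,W\rangle .
\]
This identity is purely algebraic, so it does not depend on the sign convention for $\mathcal R$; your slip is the order of $A$ and $B$. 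You can confirm it on $\SO(3)$ with $\mathcal R(X,Y)Z=-\frac14(X\times Y)\times Z$: the left side is $-\frac14\langle W\times\dot q,A\times B\rangle$, while $\langle\mathcal R(A,B)\dot q,W\rangle=\frac14\langle W\times\dot q,A\times B\rangle$. For the same reason, your safeguard of checking the second term against \eqref{eq: cubic_tension} would not catch the error, since the identity used there really does hold with a plus sign. As written, your three bullets pair $W$ with $-\nabla^5_{\dot q}\dot q-\mathcal R(\nabla^3_{\dot q}\dot q,\dot q)\dot q+\mathcal R(A,B)\dot q$. That puts the opposite sign on $\mathcal R(\nabla_{\dot q}\dot q,\nabla^2_{\dot q}\dot q)\dot q$, so the equation you display at the end does not actually follow from your intermediate claims.

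With the corrected sign, all three contributions of the third term carry a minus sign, and its first variation is
\[
-\int_0^T\big\langle\nabla^5_{\dot q}\dot q+\mathcal R(\nabla^3_{\dot q}\dot q,\dot q)\dot q+\mathcal R(\nabla_{\dot q}\dot q,\nabla^2_{\dot q}\dot q)\dot q,\,W\big\rangle\,dt .
\]
Add your tension contribution $\int_0^T\langle\varepsilon_2^2(\nabla^3_{\dot q}\dot q+\mathcal R(\nabla_{\dot q}\dot q,\dot q)\dot q)-\varepsilon_1^2\nabla_{\dot q}\dot q,\,W\rangle\,dt$ and apply the fundamental lemma; this gives \eqref{eq: quintic_tension}. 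The paper displays both of these variations with the opposite overall sign. Only the relative sign matters, and yours is the correctly signed version. The remaining ingredients match the paper and suffice for the ``only if'' direction: localizing to variations supported inside a single smooth piece, and passing to \eqref{eq: quintic_in_tension_variational_principle} by rescaling and renaming the weights.
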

\begin{proof}
    From the previous analysis on cubics in tension, it follows easily that
    \begin{align*}
        \delta \int_0^T &\left(\varepsilon_1^2\|\dot{q}\|^2 + \varepsilon_2^2\|\nabla_{\dot{q}}\dot{q}\|^2 \right)dt \\ 
        &= -\int_0^T \langle \varepsilon_2^2\nabla_{\dot{q}}^3 \dot{q} + \varepsilon_2^2\mathcal{R}(\nabla_{\dot{q}} \dot{q}, \dot{q})\dot{q} -\varepsilon_1^2\nabla_{\dot{q}} \dot{q},\delta q\rangle dt
    \end{align*}
    Moreover, in \cite{SLCamCr95}, it is shown that Riemannian quintic polynomials satisfy:
    {\small$$\delta \int_0^T \|\nabla_{\dot{q}}^2 \dot{q}\|^2dt = \int_0^T \langle\nabla_{\dot{q}}^5 \dot{q} + \mathcal{R}(\nabla_{\dot{q}}\dot{q}, \nabla^2_{\dot{q}}\dot{q})\dot{q} + \mathcal{R}(\nabla_{\dot{q}}^3\dot{q}, \dot{q})\dot{q}, \ \delta q\rangle dt$$}
    Combining the two yields the result.
\end{proof}

On $\SO(3)$ with its standard bi-invariant metric, equation \eqref{eq: quintic_tension} can be represented as:
\begin{equation}\label{eq: quintics_tension_biinvariant}
    \begin{split}
       \dot{R} = R\hat{\Omega}, \qquad \ddot{\Omega} &= \eta - \frac12 \Omega \times \dot{\Omega}, \qquad \dot{\eta} = \zeta - \frac12 \Omega \times \eta\\
    \ddot{\zeta} + \Omega \times \dot{\zeta} + \frac12 \dot{\Omega}& \times \zeta + \frac12 \Omega \times (\Omega \times \zeta) + \frac14 (\dot{\Omega} \times \eta) \times \Omega = \\
    &\qquad\qquad \varepsilon_2^2 \zeta + \varepsilon_2^2\Omega \times (\Omega \times \dot{\Omega}) - \varepsilon_1^2\dot{\Omega} 
    \end{split}
\end{equation}
Taking $\varepsilon_1 = \varepsilon_2 = 0$, we recover the left-trivialized equations of Riemannian quintic polynomials on $\SO(3)$ \cite{Zefran98, Krakowski2002}.



\section{Rigid Body Actuation}

Rigid body motion in \(\R^3\) is often modeled on \(\SE(3) \cong \SO(3) \times \R^3\), with $\SO(3)$ representing the rotational dynamics and $\R^3$ representing the translational dynamics. In many robotic applications, such as quadrotor control, the attitude dynamics can be viewed as a fully-actuated subsystem whose state affects the direction of applied thrust in the (underactuated) translational dynamics:
\begin{align}
    \ddot{x} + \nabla V(x) &= fRe_3, \label{eq: translation_dyn} \\
    \dot{R} &= R\hat{\Omega}, \label{eq: rotational_kin} \\
    \J \dot{\Omega} &= \J\Omega \times \Omega + \tau, \label{eq: rotational_dyn}
\end{align}
where \((R,x)\in \SO(3) \times \R^3\), \(V\in C^{1,1}(\R)\) is the potential energy, \(f\in C^{0,1}([0,T],\R)\) is the thrust input, \(\tau\in C^{0,1}([0,T],\R^3)\) is the torque input, and \(\Omega\in\R^3\) is the body angular velocity. It is therefore important to study trajectory-tracking problems for the rotational subsystem \eqref{eq: rotational_kin}-\eqref{eq: rotational_dyn} on $\SO(3)$ before advancing to the full model. In \cite{Lee}, a geometric torque control $\tau_d$ was proposed which yields near-global exponential tracking of a desired attitude \(R_d\).

This model is, however, an idealization: the inputs \(f\) and \(\tau\) are treated as directly applied forces and torques, whereas in practice they are generated through actuator dynamics. The commanded input acts on internal actuator variables, such as voltage or current, and the resulting torque is produced only indirectly. This introduces additional dynamics, as well as magnitude and rate constraints on the applied torque. A simple first-order model for the torque actuator is
\[
C\dot{\tau}+\tau=KM,
\]
where \(C,K\in\R\) and \(M\in C^{0,1}([0,T],\R^3)\) is the control input acting on the actuator \cite{CubeSats, spacecraft}. Hence, the effective rotational dynamics become the third-order system:
\begin{equation}\label{eq: effective_control_dynamics}
\begin{split}
    \dot{R} &= R\hat{\Omega}, \qquad \J\dot{\Omega} = \J\Omega \times \Omega + \tau, \\
    &\qquad\dot{\tau} = \frac{1}{C}(KM - \tau),
\end{split}
\end{equation}
For a general actuator input \(M\), we define the \emph{torque error}
\[
e_\tau:=\tau-\tau_d,
\]
which acts as an additive disturbance in the closed-loop attitude subsystem:
\[
\dot R = R\hat\Omega,\qquad
J\dot\Omega = J\Omega\times\Omega + \tau_d + e_\tau.
\]

Consider the actuator feedback
\begin{equation}\label{eq: control_design_M}
    M=\frac1K\big(\tau + C\dot\tau_d - Ck_\tau e_\tau\big),
\end{equation}
which yields the closed-loop error dynamics
\[
\dot e_\tau = -k_\tau e_\tau,
\]
and therefore
\[
\tau(t)=\tau_d(t)+e_\tau(t),\qquad
\|e_\tau(t)\|\le \|e_\tau(0)\|e^{-k_\tau t}.
\]
Hence the actuator error decays exponentially to zero. Since the nominal attitude tracking dynamics are exponentially stable, standard cascade arguments imply that the full actuator-attitude system retains near-global exponential tracking; that is, the attitude tracking errors converge exponentially to zero.

On the other hand, \eqref{eq: rotational_kin}-\eqref{eq: rotational_dyn} can be interpreted as the left-trivialization of the double-integrator system
$$\nabla_{\dot{R}} \dot{R} = R\tilde{\tau}$$
where $\nabla$ is the Levi-Civita connection of the left-invariant rigid body metric on $\SO(3)$ with inertia tensor $\J$, and $\tilde{\tau} := \widehat{\J^{-1}\tau}$. Taking another covariant derivative thus yields:
\begin{align*}
    \nabla_{\dot{R}}^2 \dot{R} &= R\left(\J^{-1} \dot{\tau} + \nabla^{\so(3)}_{\Omega} \J^{-1}\tau)\right)^\wedge \\
    &= R\left( \frac{K}{C}\J^{-1}M - \frac1{C}\J^{-1}\tau + \nabla_{\Omega}^{\so(3)}\J^{-1}\tau\right)^\wedge
\end{align*}
where $\nabla^{\so(3)}_{\Omega} \tilde\tau := \frac12\Omega \times \tilde\tau + \frac12 \J^{-1}(\Omega \times \J \tilde\tau + \tilde\tau \times \J\Omega)$. Consider the change of variables via feedback:
\begin{equation}\label{eq: M_transform_u}
    M = \frac{\tau}{K} + \frac{C}{K}\J\left(u - \nabla^{\so(3)}_{\Omega}\J^{-1}\tau\right)
\end{equation}
where $u \in C^{0,1}([0,T], \R^3)$ is an artificial control input to be designed.
We thus obtain the triple-integrator system
$$\nabla^2_{\dot{R}} \dot{R} = R\hat{u}.$$
Hence, the effective control system is of the form $K_2(j^2 R) = \frac12R\hat{u}$. 
Under the control design \eqref{eq: control_design_M}, it follows that
$$u = \J^{-1}(\dot{\tau}_d - k_\tau e_\tau) + \nabla^{\so(3)}_{\Omega} \J^{-1}\tau$$
results in the attitude tracking errors converging exponentially to zero. Given such a trajectory tracking control design, a common goal is to subsequently design the desired attitude trajectory $R_d$ such that energy consumption is minimized, often with regularity penalties on the derivatives of $R_d$ to ensure sufficient smoothness. As a proxy for energy minimization, we consider the optimal control problem
\begin{equation}\label{eq: optimal_control_final}
    \min_{u,R}\frac12\int_0^T
\left(
\varepsilon_1^2\|\dot R\|^2
+\varepsilon_2^2\|\nabla_{\dot R}\dot R\|^2
+\|u\|^2
\right)\,dt
\end{equation}

subject to \(\nabla_{\dot R}^2\dot R=R\hat u\) and boundary conditions in position, velocity, and covariant acceleration. The corresponding critical points are Riemannian quintics in tension with parameters \(\varepsilon_1,\varepsilon_2\). These are not given by the bi-invariant equations \eqref{eq: quintics_tension_biinvariant}, since the rigid body metric is left-invariant with inertia tensor \(\J\). In body coordinates, the induced connection on \(\so(3)\cong\mathbb R^3\) is given by \cite{goodman2022reduction}
\[
\nabla^{\so(3)}_\xi\eta
=
\frac12\left(
\xi\times\eta
-
\J^{-1}(\J\xi\times\eta + \J\eta\times\xi)
\right).
\]
Similarly, the curvature tensor is left-equivariant, and is its restriction to the Lie algebra satisfies:
\[
\mathcal R(\xi,\eta)\sigma
=
\nabla^{\so(3)}_\xi\nabla^{\so(3)}_\eta\sigma
-
\nabla^{\so(3)}_\eta\nabla^{\so(3)}_\xi\sigma
-
\nabla^{\so(3)}_{\xi\times\eta}\sigma.
\]
Now define
\[
\Omega^{(\alpha)}:=\big(R^T\nabla_{\dot R}^{\alpha}\dot R\big)^\vee,\qquad \alpha=0,\dots,4.
\]
Then
\[
\Omega^{(\alpha)}
=
\dot\Omega^{(\alpha-1)}
+
\nabla^{\so(3)}_{\Omega^{(0)}}\Omega^{(\alpha-1)},
\]
and the quintic-in-tension equation is equivalent to
\begin{equation}\label{eq: quintics_in_tension_left-invariant}
\begin{split}
\dot R &= R\hat\Omega^{(0)},\\
\dot\Omega^{(\alpha)}
&=
\Omega^{(\alpha+1)}
-
\nabla^{\so(3)}_{\Omega^{(0)}}\Omega^{(\alpha)},
\qquad \alpha=0,\dots,3,\\
\dot\Omega^{(4)}
&+
\nabla^{\so(3)}_{\Omega^{(0)}}\Omega^{(4)}
+
\mathcal R(\Omega^{(1)},\Omega^{(2)})\Omega^{(0)}
+
\mathcal R(\Omega^{(3)},\Omega^{(0)})\Omega^{(0)} \\
&=
\varepsilon_2^2\Omega^{(3)}
+
\varepsilon_2^2\mathcal R(\Omega^{(1)},\Omega^{(0)})\Omega^{(0)}
-
\varepsilon_1^2\Omega^{(1)}.
\end{split}
\end{equation}

\subsection{Numerical Simulations}

We numerically compare two choices of nominal attitude trajectories on $\SO(3)$: Riemannian cubic polynomials, and Riemannian quintics in tension, both computed with respect to the left-invariant metric with inertia tensor
\[
    \J = \operatorname{diag}(0.082,\;0.0845,\;0.1377),
\]
which reflects the experimentally measured inertia tensor of a type of quadrotor UAV \cite{Lee}. Riemannian cubics were chosen as a baseline, as they represent a standard choice in nominal trajectory for rigid body control systems without actuator dynamics. As such, it is expected that the quintics in tension will outperform Riemannian cubics in contexts where the actuator dynamics become highly relevant to the tracking problem, which can correspond to large $C$, small $K$, and aggressive boundary conditions in position and velocity. 

We note that the equations \eqref{eq: M_transform_u} and \eqref{eq: rotational_dyn} inform natural choices in tension parameters for \eqref{eq: optimal_control_final}. Indeed, it is easily seen that
$$M = \frac1{K}\J\left(\Omega^{(1)} + Cu - C\nabla^{\so(3)}_{\Omega^{(0)}}\Omega^{(1)} \right)$$
so that the true control input $M$ is approximately minimized with respect to the optimal proxy control $u^\ast$ solving \eqref{eq: optimal_control_final} with $\varepsilon_1 \ll 1$ small and $\varepsilon_2 \propto 1/C$. We chose the particular values of $K = 0.15$, $C = 0.75$, $\varepsilon_1 = 0.01$, and $\varepsilon_2 = 0.5$ for our simulations.

Riemannian cubics in general can only be made to solve boundary conditions in position and velocity, whereas quintics in tension can solve additional boundary conditions in accelerations. To make for a fair comparison between the two, we choose ``natural" quintics in tension, which satisfy the given boundary conditions in position and velocity in addition to $\Omega^{(2)}(0) = \Omega^{(2)}(T) = 0$. Such conditions minimize \eqref{eq: optimal_control_final} with free boundary conditions in acceleration.

For both Riemannian cubics and quintics in tension, the resulting integral curves were used as the nominal trajectory $R_d(t)$ for the closed-loop effective dynamics \eqref{eq: effective_control_dynamics} with the control input \eqref{eq: control_design_M}, and with the desired torque input given in \cite{Lee}. The same boundary conditions, controller gains, actuator parameters, and initial tracking offset were used in both cases. To study the tracking performance, we use the following notions of tracking errors:
\begin{align*}
    e_R &= \frac12 (R_d^T R - R^T R_d)^\vee, \quad 
    e_\Omega = \Omega - R^T R_d \Omega_d, \quad e_\tau = \tau - \tau_d.
\end{align*}
The physical norm $\|\cdot\|_\J$ of the left-invariant metric is used to calculate the norms of all tracking errors and control inputs.
\begin{figure}[t]
    \centering
    \includegraphics[width=1\linewidth]{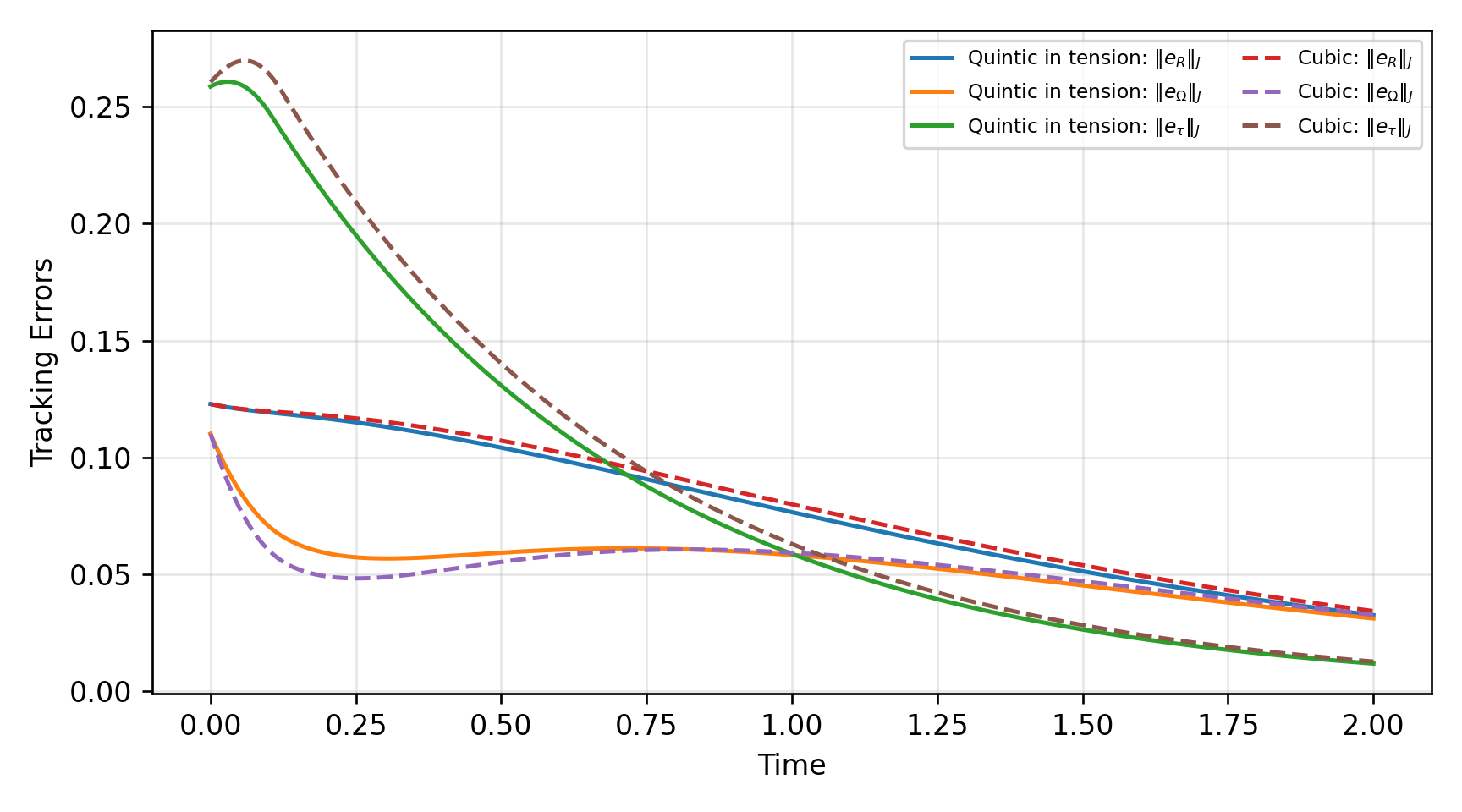}
    \includegraphics[width=1\linewidth]{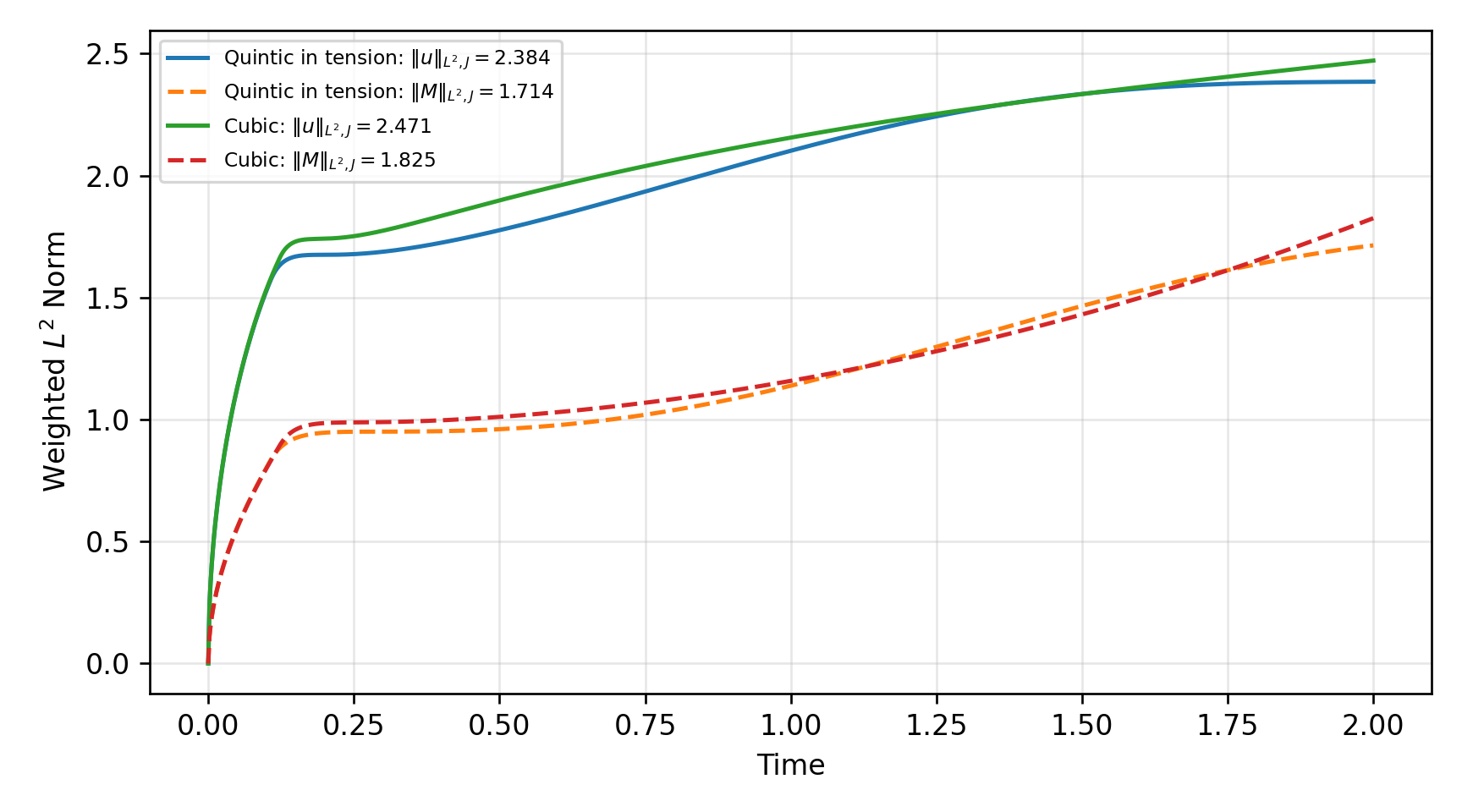}
    \caption{Comparison of Riemannian cubic and quintic-in-tension nominal
trajectories for attitude tracking with first-order actuator dynamics.
Top: tracking errors measured in the physical \(J\)-norm. Bottom:
cumulative weighted \(L^2\) norms of the higher-order proxy input and
the actuator command.}
    \label{fig:sims}
\end{figure}

As shown in Figure~\ref{fig:sims}, the closed-loop tracking behavior is
comparable for the two nominal trajectories\footnote{The code can be found at \url{https://doi.org/10.5281/zenodo.20730224}}. The Riemannian quintic in tension
produces a slightly smaller torque tracking error, while the Riemannian cubic
has a smaller initial angular velocity tracking error. The attitude tracking
error is roughly the same in both cases.
Thus, for this maneuver, the effect of the higher-order trajectory model is not
primarily visible in the attitude path or in the final tracking error.

The distinction is more apparent in the control diagnostics. In the second plot
of Figure~\ref{fig:sims}, we compare the cumulative weighted $L^2$ norms
\[
    t \mapsto \left(\int_0^t \|u(s)\|_{\J}^2\,ds \right)^{1/2},
    \qquad
    t \mapsto \left(\int_0^t \|M(s)\|_{\J}^2\,ds \right)^{1/2}.
\]
The quintic in tension yields smaller final values for both quantities,
indicating a modest reduction in the higher-order proxy input and in the actual
actuator command, which can correspond to measurable loss in energy consumption over many repeated trials.

We emphasize that this comparison should not be interpreted as solving the
exact minimum-command optimal control problem. Rather, Riemannian quintics in tension provide an
intrinsic higher-order baseline nominal trajectory motivated in the presence of first-order actuator dynamics. The
numerical results suggest that, under actuator-sensitive parameter choices and
aggressive boundary data, the quintic-in-tension trajectory can provide a small
but measurable improvement in actuator-relevant cost while preserving comparable
closed-loop tracking performance when compared to Riemannian cubic splines.

\end{document}